\theoremstyle{definition}
\newtheorem{definition}{Definition}[section]
\newtheorem*{remark*}{Remark}
\newtheorem*{claim*}{Claim}
\theoremstyle{remark}
\def\str#1{\mathbf {#1}}
\def\Emb{\mathop{\mathrm{Emb}}\nolimits}
\theoremstyle{plain}
\newtheorem{theorem}{Theorem}[section]
\newtheorem{observation}[theorem]{Observation}
\newtheorem{lemma}[theorem]{Lemma}
\newtheorem{problem}[theorem]{Problem}
\DeclareSymbolFont{stixsym}       {LS1}{stix2scr}   {m} {n}
\DeclareSymbolFont{stixsym2}       {LS1}{stix2frak}   {m} {n}
\DeclareMathSymbol{\bigblacktriangledown}     {\mathord}{stixsym}{"D3}
\DeclareMathSymbol{\bigtriangledowns}     {\mathord}{stixsym}{"D4}
\DeclareMathSymbol{\downtriangleleftblack}     {\mathord}{stixsym2}{"D7}
\DeclareMathSymbol{\downtrianglerightblack}     {\mathord}{stixsym2}{"D8}
\def\seq{\mathrm{seq}}
\def\osc{\mathrm{osc}}
\def\int{\mathrm{int}}
\title[Oscillating Boolean algebras]{Oscillating subalgebras of the atomless countable Boolean algebra}
\author[D.\ B.]{Dana Bartošová}
\address{Department of Mathematics, University of Florida, 1400 Stadium Road, Gainesville, FL 32601}
\email{dbartosova@ufl.edu}
\author[D.\ Ch.]{David Chodounský}
\address{Department of Applied Mathematics (KAM), Charles University, Ma\-lo\-stranské~nám\v estí 25, Praha 1, Czech Republic and Institute of Mathematics of the Czech Academy of Sciences, Žitná 25, Praha 1, Czech Republic}
\email{chodounsky@math.cas.cz}
\author[B.\ C.]{Barbara Csima}
\address{Department of Pure Mathematics, University of Waterloo, 200 University Ave W, Waterloo, ON N2L 3G1, Canada}
\email{csima@uwaterloo.ca}
\author[J.\ H.]{Jan Hubička}
\address{Department of Applied Mathematics (KAM), Charles University, Ma\-lo\-stranské~nám\v estí 25, Praha 1, Czech Republic}
\email{hubicka@kam.mff.cuni.cz}
\author[M.\ K.]{Matěj Konečný}
\address{Institute of Algebra, TU Dresden, Dresden, Germany}
\email{matej.konecny@tu-dresden.de}
\author[J.\ L.-G.]{Joey Lakerdas-Gayle}
\address{Department of Pure Mathematics, University of Waterloo, 200 University Ave W, Waterloo, ON N2L 3G1, Canada}
\email{ja2999lakerdasgayle@uwaterloo.ca}
\author[S.\ U.]{Spencer Unger}
\address{University of Toronto, Canada}
\author[A.\ Z.]{Andy Zucker}
\address{Department of Pure Mathematics, University of Waterloo, 200 University Ave W, Waterloo, ON N2L 3G1, Canada}
\email{a3zucker@uwaterloo.ca}
\begin{document}
\begin{abstract}
	We show that the big Ramsey degree of the Boolean algebra with 3 atoms within the countable atomless Boolean algebra is infinite.
\end{abstract}

\maketitle

\section{Introduction}
In this note we prove:
\begin{theorem}
	\label{thm:main}
	Let $\str{B}$ be the countable atomless Boolean algebra. Then there exists a coloring of the 3-atom subalgebras of $\str{B}$ with infinitely many colors such that
	every countable atomless subalgebra of $\str{B}$ contains a 3-atom subalgebra of every color.
\end{theorem}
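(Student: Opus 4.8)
The plan is to fix one concrete copy of $\str B$ and attach to every $3$-atom subalgebra a single numerical invariant, an \emph{oscillation number} $\osc$, that ranges over all of $\omega$ and that is realized by a $3$-atom subalgebra \emph{inside every} countable atomless subalgebra. Concretely, I would realize $\str B$ as the algebra of clopen subsets of the Cantor space $2^{\omega}$, so that a $3$-atom subalgebra is exactly an unordered partition $\{A_1,A_2,A_3\}$ of $2^{\omega}$ into three nonempty clopen pieces. To such a partition I attach a labelling of the tree nodes $s\in 2^{<\omega}$ by the symbols $\{\All,\None,\Left,\Right\}$ recording how the cylinder $[s]$ meets the three pieces (already contained in a single piece, still meeting all three, or split in a distinguished left/right fashion relative to the induced ordering of the pieces), read off from it a sequence $\seq$ along the distinguished branch through the splitting tree, and let $\osc$ be the number of alternations in $\seq$. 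The coloring is then $c(\str A)=\osc(\str A)\in\omega$, which visibly uses infinitely many colors.

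Two things then remain. First, every value of $\osc$ is attained by some $3$-atom subalgebra of $\str B$; this is a routine construction nesting the splitting pattern $k$ times to force exactly $k$ alternations. Second, and this is the theorem itself, for every countable atomless $\str C\le\str B$ and every $k\in\omega$ there is a $3$-atom subalgebra $\str A\le\str C$ with $\osc(\str A)=k$. The point of choosing an oscillation count, rather than a cruder measure such as the height of the splitting tree or the number of monochromatic blocks, is precisely to defeat the following obstruction: a ``coarse'' subalgebra $\str C$ can force all of its nonzero elements to have large ambient height, and it need not even admit any nonzero element below a given cylinder (for instance $\str C$ could consist of the clopen sets not depending on the first coordinate). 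Thus absolute complexity cannot be prescribed from inside $\str C$, whereas the number of oscillations is a \emph{scale-free} and \emph{position-free} feature that survives any such rescaling.

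To realize a prescribed $\osc=k$ inside an arbitrary $\str C$, I would build the three pieces by a recursion of length $k$ carried out entirely with elements of $\str C$, using only the intrinsic fact that $\str C$ is atomless: every nonzero $d\in\str C$ splits into two nonzero elements of $\str C$. At stage $i$ I choose such a split deep enough below the part of the partition already committed so as to add exactly one more alternation to $\seq$, keeping all three running pieces nonzero and their join equal to the top; after $k$ stages I close off the construction into an honest $3$-atom partition of $\str C$. Since $\str C$ is itself an atomless copy of $\str B$ and $\osc$ is invariant under the relevant rescalings, the canonical $k$-oscillating pattern can be reproduced using purely intrinsic splits.

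The main obstacle is exactly this unavoidability step, and within it the danger that the \emph{ambient} labelling computed in $\str B$ disagrees with the pattern I intend: between two successive chosen splits the ambient splitting tree may wiggle and insert spurious alternations, or a chosen element of $\str C$ may fail to register the intended flip. The crux is therefore a density/pigeonhole (fusion) lemma showing that, no matter how the generators of $\str C$ are placed in the tree, one can always select the next split from $\str C$ cleanly enough that it contributes exactly one ambient oscillation and no more; iterating this $k$ times produces $\str A\le\str C$ with $\osc(\str A)=k$. I expect the bookkeeping keeping the three pieces nonzero and exhaustive throughout the recursion to be the fiddliest part, while the conceptual heart is the scale- and position-invariance of $\osc$ combined with the atomlessness of $\str C$.
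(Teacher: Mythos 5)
Your overall strategy---color a $3$-atom subalgebra by an oscillation count of its atoms and show that every value is realized inside every countable atomless subalgebra---is exactly the strategy of the paper, and you correctly locate the difficulty in the unavoidability step. But as written the proposal has two genuine gaps, and the second one is where all the mathematical content of the theorem lives. First, your invariant is not actually defined: for a partition of $2^{\omega}$ into three clopen pieces the ``splitting tree'' is a finite tree that in general has many branches, so there is no canonical ``distinguished branch'' along which to read off $\seq$, and you never say how the left/right labels or the ``induced ordering of the pieces'' are chosen so that the count is well defined on the subalgebra (rather than on a presentation of it). The paper sidesteps this by working in the interval algebra of $[0,1)\cap\mathbb{Q}$, attaching to each element $a$ the finite set $\int(a)$ of $e$-values of its interval endpoints (for a fixed finite-to-one $e$), ordering the three atoms by their minima, and counting alternations in the symmetric difference $\int(b)\,\Delta\,\int(c)$ of the last two atoms; everything is canonical.

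Second, and more seriously, the step you call ``the crux''---a density/pigeonhole lemma guaranteeing that the next split chosen from $\str{C}$ contributes exactly one ambient oscillation and no more---is asserted, not proved; ``I expect the bookkeeping\dots'' is not an argument, and atomlessness of $\str{C}$ alone does not give it to you, since (as you yourself note) the ambient presentation of an element of $\str{C}$ can wiggle arbitrarily. The paper proves precisely this lemma, and the proof is not a routine fusion: given nonempty $a\in\str{C}$ and $n$, it takes $2|L|+1$ pairwise disjoint nonempty elements of $\str{C}$ below $a$, where $L$ is the (finite, because $e$ is finite-to-one) set of points with $e$-value at most $n$, and observes that each point of $L$ can be an interval endpoint of at most two pairwise disjoint good sets; pigeonhole then yields $b\subseteq a$ in $\str{C}$ with $\min(\int(b))>n$ (Lemma~\ref{lem:oscN}). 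A second lemma then increases the oscillation by exactly one by removing such a ``high'' piece from $a$ (and, if the parity is wrong, also from $b$), using that $\int(a\setminus c)=\int(a)\cup\int(c)$ when $\int(c)$ sits entirely above $\int(a)\cup\int(b)$. Without an analogue of this counting argument in your Cantor-space picture (where you would also need to say what the finite-to-one ``level'' function is and verify the corresponding ``each node blocks boundedly many disjoint clopen sets'' bound), the recursion you describe cannot be carried out, so the proposal is a correct plan rather than a proof.
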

This 
is motivated by the following problem formulated by Kechris, Pestov and Todorcevic in their seminal paper connecting structural Ramsey theory and topological dynamics~\cite{Kechris2005} (and is also explicitely asked by Ma\v{s}ulovi\'c in \cite{Masulovic2023}).
\begin{problem}[Kechris--Pestov--Todorcevic 2005~\cite{Kechris2005}, Problem 11.2]
	\label{prob:1}
	In each of the following cases find the topological dynamics analog of a corresponding infinite Ramsey-theoretic result.
	\begin{enumerate}[label=(\roman*)]
	  \item \label{p1} The rationals with the usual ordering.
	  \item \label{p2} The random ordered graph.
	  \item \label{p3} The random $\str{K}_n$-free ordered graph, ($n=3,4, \ldots$).
	  \item \label{p4} The random $\mathcal{A}$-free ordered hypergraph of type $L_0$, for any set $\mathcal A$ of irreducible finite hypergraphs of type $L_0$.
	  \item \label{p5} The ordered rational Urysohn space.
	  \item \label{p6} The $\aleph_0$-dimensional vector space over a finite field with the canonical ordering.
	  \item \label{p7} The countable atomless Boolean algebra with the canonical ordering
	\end{enumerate}
\end{problem}
At the time~\cite{Kechris2005} was published, even the corresponding infinite Ramsey-theoretic results (let alone the topological dynamics analogs) were only fully understood for~\ref{p1} and~\ref{p6}. A Ramsey theorem for \ref{p6} was given by Carlson~\cite{carlson1987infinitary} (see~\cite[Chapter 5.7]{todorcevic2010introduction}), and we will discuss case~\ref{p1} below, after introducing some notions of structural Ramsey theory.

Given structures $\str{A}$ and $\str{B}$, we denote by $\Emb(\str A,\str B)$ the set of
all embeddings from $\str{A}$ to $\str{B}$.
If $r< t\in \mathbb{N}$, we write $\str{B}\longrightarrow (\str{B})^\str{A}_{r,t}$ as a shortcut for the following statement:
\begin{quote}
	For every $r$-colouring $\chi\colon\Emb(\str{A},\str{B})\to r$, there exists an embedding
	$f\in \Emb(\str{B},\str{B})$ such that $|\{\chi(f\circ e):e\in \Emb(\str{A},\str{B})\}|\leq t$.
\end{quote}
For a countably infinite structure $\str{B}$ and its finite substructure
$\str{A}$, the \emph{big Ramsey degree} of $\str{A}$ in $\str{B}$ is the least
number $\ell\in \mathbb N$, if it exists, such that $\str{B}\longrightarrow
(\str{B})^\str{A}_{r,\ell}$ for every $r\in \mathbb N$.
We say that \emph{the big
Ramsey degrees of $\str{B}$ are finite} if for every finite substructure
$\str{A}$ of $\str{B}$ the big Ramsey degree of $\str{A}$ in $\str{B}$ exists. 

We note the following two ways that the big Ramsey degree of $\str{A}$ in $\str{B}$ can fail to exist. Call a coloring $\chi$ of $\Emb(\str{A}, \str{B})$ \emph{unavoidable} if for every $f\in \Emb(\str{B}, \str{B})$, we have $\{\chi(f\circ e): e\in \Emb(\str{A}, \str{B})\} = \mathrm{Im}(\chi)$.
\begin{enumerate}
    \item 
    For every $r\in \mathbb{N}$, there is an unavoidable coloring $\chi\colon \Emb(\str{A}, \str{B})\to r$.
    \item 
    There is an unavoidable coloring $\chi\colon \Emb(\str{A}, \str{B})\to \mathbb{N}$.
\end{enumerate}
Clearly $(2)$ implies $(1)$. We do not know of any examples where $(1)$ holds and $(2)$ fails. Indeed, letting $\str{B}_3$ and $\str{B}$ be the $3$-atom and countable atomless Boolean algebras, respectively, then Theorem~\ref{thm:main} can be restated as asserting that there is an unavoidable coloring $\chi: \Emb(\str{B}_3, \str{B})\to \mathbb{N}$. 

Refining an unpublished upper-bound theorem of Laver, Devlin gave the following precise infinite Ramsey-theoretic result
for the order of rationals.
\begin{theorem}[Devlin, 1979~\cite{devlin1979}]
	\label{thm:devlin}
	Denote the order of rationals by $(\mathbb Q,<)$.
	For every finite linear order $\str{A}$, the big Ramsey degree of $\str{A}$ in $(\mathbb Q,<)$ is 
	precisely the $|A|$-th \emph{odd tangent number}: the $(2|A|-1)$-th derivative of $\tan(z)$ evaluated at $0$.
\end{theorem}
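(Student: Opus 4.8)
The plan is to prove matching upper and lower bounds by representing $(\mathbb{Q},<)$ combinatorially inside the infinite binary tree $2^{<\omega}$. First I would fix a \emph{diagonal} antichain $D\subseteq 2^{<\omega}$ whose lexicographic order is isomorphic to $(\mathbb{Q},<)$; then an embedding $e\in\Emb(\str{A},(\mathbb{Q},<))$ of a finite linear order with $n=|A|$ is simply a choice of an $n$-element antichain of $D$, and each copy of $(\mathbb{Q},<)$ inside $(\mathbb{Q},<)$ corresponds to a strongly diagonal antichain of the tree. The central combinatorial object is the \emph{similarity type} (Devlin type) of such an antichain: the isomorphism type of its meet-closure as a tree, together with the relative heights (levels) of its nodes and the left/right branching data that recover the lexicographic order. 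Since all linear orders of size $n$ are isomorphic, the degree depends only on $n$, and the goal becomes counting the realizable similarity types.

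For the upper bound (the part due to Laver), I would invoke Milliken's tree theorem, itself a consequence of the Halpern--L\"auchli theorem: given any finite coloring of the $n$-element antichains of $2^{<\omega}$, there is a strong subtree $T\subseteq 2^{<\omega}$, isomorphic to $2^{<\omega}$ and carrying a diagonal antichain isomorphic to $(\mathbb{Q},<)$, on which the coloring depends only on the similarity type. Transporting this back through the identification with $\Emb((\mathbb{Q},<),(\mathbb{Q},<))$ yields $(\mathbb{Q},<)\longrightarrow((\mathbb{Q},<))^\str{A}_{r,t}$, where $t$ is the number of similarity types realizable by order-embeddings of an $n$-point linear order. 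The delicate point here is that the passage to a strong subtree must be level-preserving enough that $T$ still supports a full copy of the rationals; this is exactly what the strong-subtree formulation of Milliken's theorem guarantees.

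To identify this count with the $n$-th odd tangent number, I would construct a bijection between the admissible similarity types and the up-down alternating permutations of length $2n-1$. A diagonal $n$-element antichain has a meet-closure with exactly $n$ leaves and $n-1$ branching nodes, hence $2n-1$ nodes lying at $2n-1$ distinct levels; reading these nodes in level order, together with the left/right branching pattern, produces a permutation which, because each branching node must sit strictly below both of its descendant subtrees, is forced to be alternating. Since the exponential generating function for the number of alternating permutations of odd length is $\tan(z)$, the count equals the $(2|A|-1)$-th derivative of $\tan(z)$ at $0$, as claimed.

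For the lower bound I would show each admissible similarity type is unavoidable. Coloring every $e\in\Emb(\str{A},(\mathbb{Q},<))$ by the similarity type of its image in $D$, I would verify that for every $f\in\Emb((\mathbb{Q},<),(\mathbb{Q},<))$ the antichain $f[D]$ still realizes every admissible type; this is a ubiquity argument exploiting the density of $D$, since any infinite strongly diagonal antichain contains finite antichains of each Devlin type. Combining the two bounds gives precisely $t$, the $|A|$-th odd tangent number. I expect the main obstacle to be the upper bound's reduction step: correctly axiomatizing similarity types and applying Milliken's theorem so that the resulting envelope or strong subtree still contains a copy of $(\mathbb{Q},<)$, together with checking that the alternation constraint in the bijection is both necessary and sufficient for a type to be realizable by a genuine order-embedding.
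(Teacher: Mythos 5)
The paper does not actually prove Theorem~\ref{thm:devlin}: it is quoted as background, with a citation to Devlin, so your proposal can only be measured against the classical argument (Laver's upper bound via Milliken's strong subtree theorem, Devlin's exact count; see Todorcevic's book, Chapter~6). At the level of architecture your outline is that standard proof, correctly structured: represent $(\mathbb{Q},<)$ on a strongly diagonal antichain $D\subseteq 2^{<\omega}$, transfer a finite coloring of $n$-element subsets to colorings of their envelopes, apply Milliken's theorem finitely many times to stabilize the color on each similarity type, diagonalize inside the resulting strong subtree to recover a copy of $(\mathbb{Q},<)$ all of whose $n$-subsets realize only Devlin types, and count those types via increasing strict binary plane trees on $2n-1$ nodes, equivalently alternating permutations of odd length, whose exponential generating function is $\tan(z)$. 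You also correctly flag the two delicate points (envelopes, and keeping a full copy of $\mathbb{Q}$ after passing to a strong subtree).

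There is, however, one genuinely false step: your lower-bound lemma, ``any infinite strongly diagonal antichain contains finite antichains of each Devlin type,'' does not hold. An infinite strongly diagonal antichain can be a comb whose meets climb a single branch (order type $\omega$ or $\omega^*$), and then every $n$-element subset realizes essentially one similarity type, so infiniteness buys you nothing. What the lower bound actually requires, and what is true, is that the image $f[D]$ of any $f\in\Emb((\mathbb{Q},<),(\mathbb{Q},<))$ is a \emph{dense-in-itself} suborder of $D$, and that every dense-in-itself subset of a strongly diagonal $D$ realizes all Devlin types among its $n$-subsets; proving this is a nontrivial recursive construction that uses density at every branching step, and it is the real content of the lower bound rather than a soft ``ubiquity'' observation. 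Two smaller, fixable slips: the bijection should label the $2n-1$ nodes of the meet closure by their level order and then read the labels along the in-order (left-to-right) traversal --- ``reading the nodes in level order'' as you wrote produces the wrong object --- and in the upper bound you must verify that envelopes of $n$-element antichains fall into boundedly many shapes, so that only finitely many applications of Milliken's theorem are needed before diagonalizing.
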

Problem~\ref{prob:1}, in particular for the unordered versions of each class therein, motivated research on similar characterisations of big
Ramsey degrees of various structures.  For a given countably infinite structure $\str{K}$, the problem can be attacked in three steps.
\begin{enumerate}
	\item Prove a structural Ramsey theorem giving upper bounds on big Ramsey degrees of finite substructures of $\str{K}$.
	\item Fully characterise the big Ramsey degrees of $\str{K}$ by proving a lower bound theorem 
		and refining the upper bound theorem so that they match.
	\item Give an interpretation of these results in the context of topological dynamics.
\end{enumerate}

Devlin's theorem thus completes the first two steps for the order of rationals, case~\ref{p1}. Analogous results for case~\ref{p2}
appeared shortly after publication of~\cite{Kechris2005}.
The upper bound on big Ramsey degrees of the countable random (or Rado) graph can be
given by a natural adaptation of Laver's proof, which is today stated as an application of the Milliken tree theorem, see~\cite[Chapter 6]{todorcevic2010introduction} and~\cite{Sauer2006}.
A precise characterisation of big Ramsey degrees of the random graph was given in 2006 by Laflamme, Sauer, and Vuksanovic~\cite{Laflamme2006}.

Zucker~\cite{zucker2017} introduced
the notion of a \emph{big Ramsey structure}, a strengthening of having finite big Ramsey degrees which gives rise to a dynamical object called the \emph{universal completion flow}, giving
a solution to the last step of the attack on Problem~\ref{prob:1} above for the first three cases; see also the recent survey~\cite{hubicka2024survey}. However, it remains unclear what the correct analog of big Ramsey structure should be when big Ramsey degrees are infinite, and if the universal completion flow exists for every topological group.

For over a decade, further progress on Problem~\ref{prob:1} has been blocked by a lack of techniques for giving upper bounds on big Ramsey degrees
for structures with non-trivial forbidden configurations.
Recent rapid progress started by upper bounds on big Ramsey degrees of
the ``random'' (i.e. universal and homogeneous) triangle-free graph given
by Dobrinen~\cite{dobrinen2017universal}, see also~\cite{Hubicka2020CS} for a shorter proof.
Dobrinen subsequently extended the method to $\str{K}_k$-free
graphs~\cite{dobrinen2019ramsey}. Zucker further generalized and simplified the
proof for finitely constrained free amalgamation classes in finite binary
languages~\cite{zucker2020}. (We remark that the assumption about finiteness of language is necessary~\cite{Omegalabelled2025} and there are known examples of infinitely constrained caes without finite big Ramsey degrees~\cite{sauer2003canonical}.) The precise characterisation of big Ramsey degrees of structures covered by Zucker's result was given by Balko, Chodounsk\' y,
Dobrinen, Hubi\v cka, Kone\v cn\'y, Vena and Zucker~\cite{Balko2021exact}, yielding big Ramsey structures for case~\ref{p3} (see also~\cite{Vodsedalek2025}).  These results are based on the new technique of coding trees with an upper bound theorem proved using the language of set-theoretic forcing, see Dobrinen's ICM survey~\cite{dobrinen2020forcing}.
More recently, a new method extending the Carlson--Simpson theorem has been introduced~\cite{Hubicka2020CS,Balko2023Sucessor} which generalizes to
additional structures including partial orders and metric spaces.

Work on big Ramsey degrees of structures in languages with relations of arity 3 or more, considered in case \ref{p4}, introduces new challenges.
An upper bound theorem for the random 3-uniform hypergraph (i.e.\ the case when $\mathcal A=\emptyset$)
was proved by Balko, Chodounsk{\' y}, Hubi{\v c}ka, Kone{\v c}n{\' y} and Vena~\cite{Hubickabigramsey}, and 
this construction was subsequently generalized to structures in arbitrary relational languages and certain, very simple, forbidden configurations by Braunfeld, Chodounsk{\'y}, de Rancourt, Hubi{\v{c}}ka, Kawach and Kone{\v{c}}n{\'y}~\cite{braunfeld2023big}.
So far, big Ramsey degrees and big Ramsey structures have not been fully described for case~\ref{p4}.

Case~\ref{p5} of Problem~\ref{prob:1} presents a different complication.
It is easy to show that the rational Urysohn space has infinite big Ramsey degrees of vertices~\cite{hubicka2024survey}. However, an approximate (metric) form of a
vertex coloring theorem, known as the \emph{oscillation stability}, can be formulated and was proved for the Urysohn sphere. A combinatorial strategy of the proof
was given by Lopez-Abad and Nguyen Van Th{\'e}~\cite{lopez2008oscillation}, reducing the problem to a vertex coloring big Ramsey problem for Urysohn spheres with finitely
many distances, which was proved by Sauer and Nguyen Van Th{\'e}~\cite{NVT2009b} (see also a recent easier proof~\cite{bice2025oscillation}).
Work on generalizing big Ramsey degrees to the metric setting is currently in progress, see announcement~\cite{Bice2023}.

\medskip

Individual cases of Problem~\ref{prob:1} are motivated by the corresponding finitary
Ramsey-theoretic results. Case~\ref{p1} corresponds to the Ramsey theorem.
Case \ref{p2} to the unrestricted form of Ne\v set\v ril--R\"odl theorem~\cite{Nevsetvril1977}
(also independently proved by Abramson--Harrington~\cite{Abramson1978}). Cases~\ref{p3} and~\ref{p4}
correspond to the full strength of the Ne\v set\v ril--R\"odl theorem, and case~\ref{p5} to a result
of Ne\v set\v ril showing that ordered metric spaces form a Ramsey class~\cite{Nevsetvril2007}.
Case~\ref{p6} corresponds to a special case of the Graham--Leeb--Rothschild theorem~\cite{Graham1972},
and finally case~\ref{p7} corresponds to the dual Ramsey theorem (a special case of the Graham--Rothschild theorem~\cite{Graham1971}). See e.g.~\cite{hubicka2025twenty} for a discussion
of finite structural Ramsey theory. 

In this setting, it is natural to ask about big Ramsey degrees of the countable atomless Boolean algebra
and expect a ``dual form'' of Theorem~\ref{thm:devlin}.
It thus may come as a surprise that by Theorem~\ref{thm:main} the big Ramsey degree of the
3-atom Boolean algebra is infinite.
This means, that the case \ref{p7} differs significantly from cases~\ref{p1},
\ref{p2}, \ref{p3} as well as the existing partial results on  case \ref{p4}
(it is possible that in case \ref{p4} 
 big Ramsey degrees are not finite for many choices of $\mathcal A$~\cite{sauer2003canonical,typeamalg}). 
There are also significant differences to case~\ref{p5}.
Metric big Ramsey degrees of the Urysohn sphere can be seen as a limit of
approximations by Urysohn spheres with finitely many distances 
each having finite big Ramsey degrees~\cite{Hubicka2020CS,balko2021big}.
This is in contrast with countable atomless Boolean algebra, which has
no natural approximations.
Finally, it is rather curious that while the finitary Ramsey-theoretic variants of cases~\ref{p6} and~\ref{p7} are closely connected (they follow by basically the same proof), this is completely different in the infinite setting where the Ramsey theorem for case~\ref{p6} has been proved by Carlson~\cite{carlson1987infinitary} (see~\cite[Chapter 5.7]{todorcevic2010introduction}) and it says that the big Ramsey degrees of finite-dimensional subspaces are finite, which is in contrast with Theorem~\ref{thm:main}.

Consequently, we believe that solving case~\ref{p7} will need
development of novel tools.  We expect that the problem can still be addressed in
the following three steps.
\begin{problem}
	Determine the big Ramsey degree of the 2-atom Boolean algebra in the countable atomless Boolean algebra.
\end{problem}
\begin{problem}
	\label{probopt}
	Prove the corresponding infinite Ramsey-theoretic result for the countable atomless Boolean algebra and
	show its optimality even in the case where the big Ramsey degrees are infinite.
\end{problem}
\begin{problem}
	Generalize the notion of big Ramsey structure to this new setting where big Ramsey degrees are infinite
	and find its topological dynamics analogue.
\end{problem}
We remark that the solution to Problem~\ref{probopt} may take some inspiration from~\cite{Evans2}.

\section{An explicit representation of the countable atomless Boolean algebra}

We use the standard set-theoretic notation. In particular, we identify a non-negative integer $n$ with the set $\{0,..., n-1\}$ and $\omega$ with the set of non-negative integers.

Put $X = [0, 1)\cap \mathbb{Q}$, and let $\prec$ denote the usual linear order on $X$. The \emph{interval algebra} of $X$ is the Boolean subalgebra of $\mathcal{P}(X)$ consisting of finite unions of half-open intervals, see \cite{BAHandbook}. More precisely, let us call $a\subseteq X$ \emph{good} if there exists $n< \omega$ and an increasing function $\seq_a\colon 2n\to X\cup \{1\}$ such that given $x\in X$, we have $x\in a$ if and only if for some $i< n$, we have $\seq_a(2i)\preceq x\prec \seq_a(2i+1)$. Note in particular that $\min(a) = \seq_a(0)$ for good $a\subseteq X$.
Denote by $\str{B}$ set of all good subsets of $X$ viewed as a Boolean subalgebra of $\mathcal{P}(X)$.
\begin{observation}
	$\str B$ is a countable atomless Boolean algebra.
\end{observation}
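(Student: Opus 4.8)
The plan is to verify the three defining properties directly from the concrete description of $\str{B}$ as the algebra of good subsets of $X = [0,1)\cap\mathbb{Q}$: that $\str{B}$ is a Boolean subalgebra of $\mathcal{P}(X)$, that it is countable, and that it is atomless. The statement that $\str B$ is \emph{the} countable atomless Boolean algebra then follows from the classical fact that any two countable atomless Boolean algebras are isomorphic, so I will concentrate on establishing these three properties.

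First I would check that $\str{B}$ is closed under the Boolean operations, which is the content of calling the interval algebra a subalgebra of $\mathcal{P}(X)$. The key observation is that a good set is exactly a finite union of half-open intervals $[\seq_a(2i), \seq_a(2i+1))$ with endpoints in $X\cup\{1\}$. One checks that $\emptyset$ and $X=[0,1)$ are good, that the complement in $X$ of a finite union of half-open intervals is again such a union (the complement of $[\ell,u)$ relative to $[0,1)$ is $[0,\ell)\cup[u,1)$), and that the union of two good sets is good; intersection then follows from union and complement via De Morgan. The only mild subtlety is normalizing the resulting finite union of intervals so that it is presented by a single increasing sequence $\seq_a\colon 2n\to X\cup\{1\}$ with $\seq_a(2i+1)\prec\seq_a(2i+2)$, i.e.\ merging overlapping or abutting intervals; this is routine bookkeeping on sorted endpoints.

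Next, countability is immediate: each good $a$ is coded by a finite increasing tuple from the countable set $X\cup\{1\}$, and there are only countably many such finite tuples, so $\str B$ is countable (and clearly infinite, since $X\cup\{1\}$ is infinite). For atomlessness, I would take any nonempty good $a\in\str B$ and produce a good $b\in\str B$ with $\emptyset\neq b\subsetneq a$. Since $a$ is nonempty it contains some half-open interval $[\seq_a(2i),\seq_a(2i+1))$ of positive length, and because $\mathbb{Q}$ is dense I can choose a rational $q$ with $\seq_a(2i)\prec q\prec \seq_a(2i+1)$; then $b=[\seq_a(2i),q)$ is good, nonempty, and a proper subset of $a$. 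Hence no nonempty element is an atom.

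I do not expect any genuine obstacle here; the whole observation is a standard verification, and the essential content is the density of $\mathbb{Q}$ used in the atomlessness step together with the closure computation for the Boolean operations. The only point requiring care is the normalization of a finite union of half-open intervals into the canonical increasing-sequence form demanded by the definition of good, but this is a finite combinatorial manipulation of endpoints rather than a conceptual difficulty.
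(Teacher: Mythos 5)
Your proposal is correct and follows essentially the same route as the paper: verify closure under the Boolean operations, note countability from the finite-sequence coding, and use density of $\mathbb{Q}$ to split any nonempty good set, concluding by uniqueness of the countable atomless Boolean algebra. The paper's own proof is just a terser version of the same verification.
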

\begin{proof}
    The set $\str B$ is closed under unions and intersections, the complement of every good set is good, and both $\emptyset$ and $X$ are good. So $\str B$ is a sub-algebra of $\mathcal{P}(X)$.

    Since good subsets correspond to finite sequences of rationals, $\str B$ is countable. 
    For every $a\in \str B$ there is a nonempty good set $b \subsetneq a$, thus $\str B$ is the up-to-isomorphism unique countable atomless Boolean algebra. 
\end{proof}

\section{Oscillation}

Given the representation $\str B$ of the countable atomless Boolean algebra, we are now ready to
introduce the coloring function which features in Theorem~\ref{thm:main}. 
We will first assign a finite set of natural numbers $\int(a)$ to each element $a$ of $\str B$, 
and the coloring function will be defined on pairs $(a, b)$ of elements of $\str B$ as the so-called oscillation
of their sequences $\int(a)$ and $\int(b)$. 
To color the 3-atom subalgebras of $\str B$ we will use the oscillation coloring applied to their atoms. 
The idea of utilizing the oscillation coloring is inspired by a similar result 
on infinite big Ramsey degrees of the pseudotree 
(the universal countable homogeneous meet-tree) 
by Chodounský, Eskew, and Weinert~\cite{ChEW_pseudotree}, 
which was in turn inspired by a certain argument about the product of Mathias posets, see~\cite[Observation~7]{BrendleLuz}.
The concept of oscillations originates in the work of Todorcevic~\cite{Todorcevic_Roscillate}.

Fix a finite-to-1 function $e\colon X\cup \{1\}\to \omega$. 
Given $a\in \str{B}$, let
\[\int(a)=\{e(u):u\in \mathrm{Im}(\seq_a)\}.\]
 be the set of ``interesting'' numbers associated with $a$.
\begin{definition}
	Given $a_0,a_1\in \str{B}$ and $i < \omega$ we say that $a_0$ and $a_1$ \emph{oscillate} at $i$ if there exists $k\in \{0,1\}$ such that:
    \begin{itemize}
        \item 
        $i\in \int(a_k)\setminus\int(a_{1-k})$,
        \item 
        if $i\cap (\int(a_k)\mathop{\Delta} \int(a_{1-k})) \neq \emptyset$ ($\Delta$ here denotes symmetric difference) and $j = \max\{i\cap (\int(a_k)\mathop{\Delta} \int(a_{1-1}))\}$, then $j\in \int(a_{1-k})\setminus \int(a_k)$. 
    \end{itemize}
	We let

	\[\osc(a_0,a_1) = \left|\left\{i \in \omega : \text{$a_0$ and $a_1$ oscillate at $i$} \right\}\right|.\]
\end{definition}
Notice that the function is symmetric, i.e.\ $\osc(a, b) = \osc(b, a)$ for all $a,b \in \str{B}$.
We prove Theorem~\ref{thm:main} in the following more technical form.
\begin{theorem}
	\label{thm:osc}
	For every countable atomless subalgebra $\str{C}$ of $\str{B}$ and every $n>0$ there exist $a,b\in \str{C}$ such that $a\cap b = \emptyset$, $0 \notin a \cup b$, and $\osc(a,b) = n$.
\end{theorem}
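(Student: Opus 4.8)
The plan is to first reinterpret $\osc(a,b)$ in purely combinatorial terms and then to realize any prescribed value by a greedy construction inside $\str C$ that exploits atomlessness together with the hypothesis that $e$ is finite-to-$1$. \textbf{Reinterpretation.} Listing the symmetric difference $\int(a)\mathbin{\Delta}\int(b)$ in increasing order and labelling each element by whether it lies in $\int(a)\setminus\int(b)$ or in $\int(b)\setminus\int(a)$, one reads off directly from the definition that $a,b$ oscillate at $i$ precisely when $i$ is the least element of the symmetric difference, or else its immediate predecessor in the symmetric difference carries the opposite label. Hence $\osc(a,b)$ equals the number of maximal constant runs of this $0/1$-labelled word, and it suffices to produce disjoint $a,b\in\str C$ with $0\notin a\cup b$ whose interesting sets, read by value, split into exactly $n$ nonempty alternating blocks.

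\textbf{Key lemma (localization).} For every nonzero $v\in\str C$ and every finite $P\subseteq X\cup\{1\}$ there is a nonzero $d\in\str C$ with $d\subseteq v$ and $\mathrm{Im}(\seq_d)\cap P=\emptyset$. To prove this, for $p\in P$ and $c\in\str C$ record the pair $(L_p(c),R_p(c))\in\{0,1\}^2$ of truth values of the statements ``$c$ contains a left half-neighbourhood of $p$'' and ``$c$ contains a right half-neighbourhood of $p$''; these are well defined because $c$ is a finite union of half-open intervals. The map $c\mapsto(L_p(c),R_p(c))_{p\in P}$ is a Boolean homomorphism from the atomless, hence infinite, algebra $\str C\restriction v$ into the finite algebra $(\{0,1\}^2)^P$, so its kernel is an infinite ideal and contains a nonzero $d$. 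Any such $d$ is constant on a neighbourhood of each $p\in P$, so no $p\in P$ is an endpoint of $d$.

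\textbf{Construction.} Fix a nonzero $w\in\str C$ with $0\notin w$ (split any nonzero element; at most one of the two pieces contains $0$). Build disjoint nonzero $d_1,\dots,d_n\in\str C$ below $w$ recursively, with $M_0=0$. At step $k$ set $v_k=w\setminus(d_1\cup\cdots\cup d_{k-1})$ and let $P$ consist of all points with $e$-value at most $M_{k-1}$ (finitely many, since $e$ is finite-to-$1$) together with all endpoints of $w,d_1,\dots,d_{k-1}$; apply the lemma to $(v_k,P)$, choosing $d_k$ strictly below $v_k$ (possible, as the relevant kernel is an infinite ideal, so contains two disjoint nonzero elements), and set $M_k=\max\int(d_k)$. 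Then every endpoint of $d_k$ has $e$-value exceeding $M_{k-1}$, so $\max\int(d_{k-1})<\min\int(d_k)$; moreover $d_k$ is absent near every endpoint of $w,d_1,\dots,d_{k-1}$, so no interval of $d_k$ is adjacent to one of a previous $d_i$. Put $a=\bigcup_{k\ \mathrm{odd}}d_k$ and $b=\bigcup_{k\ \mathrm{even}}d_k$. These lie in $\str C$, are disjoint, and avoid $0$ since all $d_k\subseteq w$. The absence of adjacencies gives $\int(a)=\bigcup_{k\ \mathrm{odd}}\int(d_k)$ and $\int(b)=\bigcup_{k\ \mathrm{even}}\int(d_k)$ with no endpoint cancellation, while the value-separation makes $\int(a)\mathbin{\Delta}\int(b)$ read, by value, as the concatenation of the nonempty blocks $\int(d_1),\dots,\int(d_n)$ (each nonempty since a nonzero good set has a least element, hence an endpoint) with alternating labels. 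Therefore $\osc(a,b)=n$.

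\textbf{Main obstacle.} The delicate point is the localization lemma, and more precisely the need to control \emph{every} endpoint of each $d_k$ rather than, say, only its minimum: a nonzero element of $\str C$ may be forced to be a union of several intervals of large diameter, so one cannot simply squeeze $d_k$ into a short interval avoiding the finitely many ``small-$e$'' and ``previously used'' points. The homomorphism-to-a-finite-algebra argument is exactly what makes the endpoint control uniform over an arbitrary atomless $\str C$; once it is available, value-separation and the avoidance of adjacencies (secured by folding the earlier endpoints into $P$) are what guarantee that the $n$ blocks neither merge nor cancel.
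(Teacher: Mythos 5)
Your proof is correct, but it takes a genuinely different route from the paper's. The paper proceeds by induction on the oscillation: its Lemma~\ref{lem:oscP1} increments $\osc(a,b)$ by exactly one by \emph{subtracting} from $a$ (or, in a second case, from $b$) a piece $c$ all of whose endpoints have fresh, larger $e$-values, so that $\int(a\setminus c)=\int(a)\cup\int(c)$ appends a new monochromatic block; the only lemma needed is the analogue of your localization statement for the special set $P=\{x: e(x)\le n\}$ (Lemma~\ref{lem:oscN}), proved by a pigeonhole argument: among $2|L|+1$ pairwise disjoint elements below $a$, each point of $L$ is an endpoint of at most two, so some element avoids $L$ entirely. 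You instead build the witnessing pair in one shot, constructing $n$ pairwise disjoint pieces $d_1,\dots,d_n$ with value-separated endpoint sets and interleaving them by parity into $a$ and $b$; this forces you to confront endpoint cancellation in unions (adjacent intervals merging), which is why you need localization at an \emph{arbitrary} finite set $P$ (folding in the previously used endpoints), and you prove it by a quotient argument (the zero-fiber of the germ maps is an ideal of finite index in the infinite algebra $\str{C}\restriction v$, hence contains a nonzero element, and being an ideal it is downward closed, which is what lets you shrink $d_k$ freely). Both arguments hinge on $e$ being finite-to-one. Your route buys an explicit combinatorial reading of $\osc$ as the number of maximal alternating blocks (which the paper uses only implicitly), a reusable localization lemma, and no case analysis; the paper's route is shorter and its subtraction trick sidesteps cancellation automatically, since the subtracted piece's endpoints have $e$-values disjoint from the old ones and hence cannot coincide with old endpoints. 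Two minor points: your map into $(\{0,1\}^2)^P$ need not send the top element $v$ to the top of the codomain, so it is a homomorphism only onto its image; what you actually use --- that the zero-fiber is an ideal of finite index --- is nevertheless correct, since each germ map is the restriction of a two-valued homomorphism on $\str{B}$. Also, for $n=1$ your $b$ is empty, which satisfies the statement as written but not the downstream use in Theorem~\ref{thm:main} (a three-atom subalgebra needs both atoms nonempty); this is trivially repaired, e.g.\ by adjoining to $b$ a further piece $d_0$ chosen so that $\int(d_0)\subseteq\int(d_1)$ is impossible but $\max\int(d_0)<\min\int(d_1)$ fails to create a new block only if placed before $d_1$ --- or simply by running your construction with $n+1$ pieces and merging the last two labels.
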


\begin{proof}[Proof of Theorem~\ref{thm:main} using Theorem~\ref{thm:osc}]
    Let $\str A$ be the Boolean algebra with 3 atoms. 
    Define a coloring $\chi\colon \Emb(\str A, \str B) \to \omega$ as follows: 
    Given $f \in \Emb(\str A, \str B)$, let $a, b, c \in \str B$ be the images of the three atoms of $\str A$ under $f$ such that $\min(a) \prec \min(b) \prec \min(c)$, and put $\chi(f) = \osc(b,c)$.

    We claim that every countable atomless subalgebra $\str C$ of $\str{B}$ touches every color. 
    Indeed, fix  any $n \in \omega$ and use Theorem~\ref{thm:osc} to obtain $b,c\in \str{C}$ such that $b\cap c=\emptyset$, $\min X\notin b\cup c$ and $\osc(b,c) =n$. Let $\str D$ be the subalgebra of $\str C$ generated by $b$ and $c$. It is easy to see that $\str D$ has three atoms and that for every isomorphism $f\colon\str A\to \str D$ we have that $\chi(f) = n$.
\end{proof}
We now turn to the proof of Theorem~\ref{thm:osc}.
\begin{lemma}
	\label{lem:oscN}
	Let $\str{C}$ be a countable atomless Boolean subalgebra of $\str{B}$. Then for every non-empty $a\in \str{C}$ and $n\in \omega$ there exists $b\in \str{C}$ such that $b\subseteq a$ and $\min(\int(b))>n$.
\end{lemma}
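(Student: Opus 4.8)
The plan is to exploit the finiteness coming from $e$ being finite-to-one, together with the atomlessness of $\str C$, via a counting argument over pairwise disjoint elements. First I would record the reformulation: since $e$ is finite-to-one, the set $E_n=\{u\in X\cup\{1\}:e(u)\le n\}$ is finite, and for nonempty $b$ the condition $\min(\int(b))>n$ says exactly that every endpoint of $b$ has $e$-value exceeding $n$, i.e.\ $\mathrm{Im}(\seq_b)\cap E_n=\emptyset$. Thus the goal becomes: produce a nonempty $b\in\str C$ with $b\subseteq a$ whose (canonical) interval endpoints all avoid the fixed finite set $E_n$.

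The key combinatorial observation I would isolate is that a single point $p\in X\cup\{1\}$ can be an endpoint of \emph{at most two} pairwise disjoint good sets. Indeed, in the canonical separated representation (where $\seq_c$ is strictly increasing, so the intervals are genuinely separated), $p$ being a lower endpoint of a good set $c$ forces $p\in c$, while $p$ being an upper endpoint forces a left-neighborhood $(p-\varepsilon,p)$ of $p$ to lie in $c$ with $p\notin c$. Two disjoint sets cannot both contain $p$, nor can two disjoint sets both contain a common left-neighborhood of $p$; hence at most one member of a pairwise disjoint family has $p$ as a lower endpoint and at most one has $p$ as an upper endpoint. (Note $p=1$ can only occur as an upper endpoint, which only sharpens the bound.)

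With this in hand I would finish by counting. Using atomlessness of $\str C$ (which lets us repeatedly split $a$), I choose $N=2|E_n|+1$ pairwise disjoint nonempty elements $b_1,\dots,b_N\in\str C$ with each $b_j\subseteq a$. Each point of $E_n$ is an endpoint of at most two of the $b_j$, so the number of pairs $(p,j)$ with $p\in E_n\cap\mathrm{Im}(\seq_{b_j})$ is at most $2|E_n|<N$. By pigeonhole some $b_j$ has no endpoint in $E_n$; since $b_j$ is nonempty, $\int(b_j)\neq\emptyset$ and every element of $\int(b_j)$ exceeds $n$, so $b=b_j$ witnesses the lemma.

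The main obstacle the argument is built to circumvent is that we must land \emph{inside the prescribed subalgebra} $\str C$, not merely in $\str B$, so we cannot simply carve out a tiny subinterval straddling a gap between consecutive points of $E_n$: $\str C$ may fail to contain any element with such prescribed endpoints. Routing through pairwise disjoint elements and the ``at most two'' incidence bound sidesteps this entirely, reducing the problem to pure counting that invokes only that $\str C$ is atomless and that $e$ is finite-to-one.
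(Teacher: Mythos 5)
Your proof is correct and follows essentially the same route as the paper: both arguments take $2|E_n|+1$ pairwise disjoint nonempty elements of $\str C$ below $a$ and observe that each point with small $e$-value can be an endpoint of at most two members of a pairwise disjoint family, so pigeonhole yields the desired $b$. Your justification of the ``at most two'' incidence bound (at most one set containing $p$, at most one containing a left-neighborhood of $p$) is a correct elaboration of a step the paper leaves implicit.
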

\begin{proof}
	Fix $\str{C}$, $a$ and $n$.
	Put $L=\{x\in X:e(x)\leq n\}$.
	Because $\str{C}$ is atomless and $L$ finite, one can choose a set $A$ of $2|L|+1$ pairwise disjoint elements of $\str{C}$ such that every $a'\in A$ satisfies $a'\subseteq a$.
	Given $x\in L$ and $a'\in A$ we say that $x$ \emph{blocks} $a'$ if $x\in \mathrm{Im}(\seq_{a'})$.
	Notice that every $x\in L$ blocks at most two elements of $A$.
	Consequently, there exists $b\in A$ such that $\mathrm{Im}(\seq_b)\cap L=\emptyset$ and therefore $\min(\int(b))>n$.
\end{proof}
\begin{lemma}
	\label{lem:oscP1}
	Let $\str{C}$ be a countable atomless Boolean subalgebra of $\str{B}$.
	For every non-empty $a,b\in \str{C}$ there exsists $a',b'\in \str{C}$ such that $a'\subseteq a$, $b'\subseteq b$ and $\osc(a',b')=\osc(a,b)+1$.
\end{lemma}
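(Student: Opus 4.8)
The plan is to leave one of the two sets unchanged and to shrink the other by deleting a small piece that lives entirely above the current interesting numbers, thereby appending exactly one new ``block'' to the symmetric difference $\int(a)\mathbin{\Delta}\int(b)$.

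First I would record the combinatorial meaning of the coloring. Reading the elements of $S=\int(a)\mathbin{\Delta}\int(b)$ in increasing order and labelling each by the one of $a,b$ whose $\int$ contains it, the quantity $\osc(a,b)$ is exactly the number of maximal monochromatic blocks of this labelled sequence: the minimum of $S$ always oscillates, and a larger element $i\in S$ oscillates precisely when the previous element of $S$ carries the opposite label. Hence to raise the oscillation by one it suffices to create a single new monochromatic block sitting strictly above all of $S$.

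Set $N=\max(\int(a)\cup\int(b))$ and apply Lemma~\ref{lem:oscN} to $b$ and $N$ to obtain a nonempty $c\in\str{C}$ with $c\subseteq b$ and $\min(\int(c))>N$. The key technical step is to check that $\int(b\setminus c)=\int(b)\cup\int(c)$. Since every endpoint of $c$ has $e$-value above $N$ while every endpoint of $b$ has $e$-value at most $N$, the sets $b$ and $c$ share no endpoint; combined with $c\subseteq b$ this forces each maximal subinterval of $c$ to lie strictly inside a maximal subinterval of $b$. Consequently removing $c$ from $b$ preserves every endpoint of $b$ and merely adjoins the endpoints of $c$, so $\mathrm{Im}(\seq_{b\setminus c})=\mathrm{Im}(\seq_b)\cup\mathrm{Im}(\seq_c)$ and thus $\int(b\setminus c)=\int(b)\cup\int(c)$, with all new values exceeding $N$. (In particular $\min(c)\succ\min(b)$, so $b\setminus c$ is nonempty.) Taking $b'=b\setminus c$ and $a'=a$, the new symmetric difference is $S\cup\int(c)$, where $\int(c)$ is a block of $b'$-coloured elements lying entirely above $S$.

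To make the increase exactly $+1$, I would choose which set to shrink according to the colour of the top block of $S$: if $S=\emptyset$ or $\max S\in\int(a)\setminus\int(b)$, shrink $b$ as above so that the new top block has the opposite colour to the previous one (or is the unique block); if instead $\max S\in\int(b)\setminus\int(a)$, shrink $a$ symmetrically using Lemma~\ref{lem:oscN} applied to $a$ and $N$. In every case $S$ acquires exactly one additional top block and nothing below changes, so $\osc(a',b')=\osc(a,b)+1$. I expect the main obstacle to be the endpoint-preservation identity $\int(b\setminus c)=\int(b)\cup\int(c)$: the whole argument rests on the observation that the $e$-value separation between $c$ and $b$ automatically makes $c$ interior, and one must verify carefully that no endpoint of $b$ is destroyed and no uncontrolled new endpoint is created.
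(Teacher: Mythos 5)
Your proof is correct and follows essentially the same route as the paper's: both use Lemma~\ref{lem:oscN} to carve out a piece of one of the two sets whose interesting numbers lie above everything seen so far and then remove it, thereby appending exactly one new monochromatic block to the top of the symmetric difference. The only organizational differences are that you decide in advance which of $a,b$ to shrink by inspecting the colour of the top block, whereas the paper shrinks $a$ first and, if the oscillation did not increase, shrinks $b$ as well, and that you spell out the identity $\int(b\setminus c)=\int(b)\cup\int(c)$, which the paper asserts without proof.
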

\begin{proof}
	Fix $\str{C}$, $a$ and $b$.  Apply Lemma \ref{lem:oscN} to obtain $c\in \str{C}$ satisfying $c\subseteq a$ and $\min(\int(c))>\max(\int(b)\cup \int(a))$.
	Put $a'=a\setminus c$. Since $\int(a)\cup \int(c)= \int(a')$ we have either $\osc(a',b)=\osc(a,b)$ or $\osc(a',b)=\osc(a,b)+1$. In the second
	case we are done.

	If $\osc(a',b)=\osc(a,b)$ then apply Lemma \ref{lem:oscN} to obtain $d\in \str{C}$ satisfying $d\subseteq b$ and $\min(\int(d))>\max(\int(a'))$. Put $b' = b\setminus d$.
	We have $\osc(a',b')=osc(a',b)+1=\osc(a,b)+1$.
\end{proof}
\begin{proof}[Proof of Theorem~\ref{thm:osc}]
	Fix an arbitrary countable atomless subalgebra $\str{C}$ of $\str{B}$.
	Let $a$ and $b$ be arbitrary disjoint elements of $\str{C}$ such that $0\notin a\cup b$.
	By Lemma~\ref{lem:oscN}, obtain $b'\in \str{C}$ such that $b'\subseteq b$ and $\min(\int(b'))>\max(\int(a))$.
	We have $\osc(a,b')=1$.
	By a repeated application of Lemma~\ref{lem:oscP1}, we can then obtain a pair of disjoint elements of $\str{C}$ with any given oscillation.
\end{proof}

\section*{Acknowledgments}
D.B.\ was supported by grants DMS-1953955 and CAREER DMS-2144118 of the National Science Foundation (NSF).
D.Ch.\ and J.H.\ were supported by project 25-15571S of the  Czech Science Foundation (GA\v CR). 
D.Ch.\ was also partially supported by the Czech Academy of Sciences (RVO 67985840).
J.H.\ was also supported by a project that has received funding from the European Research Council (ERC) under the European Union's Horizon 2020 research and innovation programme (grant agreement No 810115). 
M.K.\ was supported by a project that has received funding from the European Union (Project POCOCOP, ERC Synergy Grant 101071674).  Views and opinions expressed are however those of the authors only and do not necessarily reflect those of the European Union or the European Research Council Executive Agency. Neither the European Union nor the granting authority can be held responsible for them. A.Z.\ was supported by NSERC grants RGPIN-2023-03269 and DGECR-2023-00412.

\bibliographystyle{alpha}
\bibliography{ramsey.bib}
\end{document}